\setlist[itemize]{noitemsep,topsep=2pt,leftmargin=16pt}
\setlist[enumerate]{noitemsep,topsep=2pt,leftmargin=16pt} 
\newcommand{\nn}{\mathbb{N}}
\newcommand\norm[1]{\left\lVert#1\right\rVert} 
\newcommand\dotprod[1]{\langle#1\rangle}       
\newtheorem{Def}{Definition}
\newtheorem{Lem}{Lemma}
\newtheorem{Prop}{Proposition}
\newtheorem{Thm}{Theorem}
\newtheorem{Cor}{Corollary}
\newtheorem{Rem}{Remark}
\title{On the extremal compatible linear connection of a Randers space}
\author{Csaba Vincze}
\address{Institute of Mathematics, University of Debrecen, H-4002 Debrecen, P.O.Box 400, Hungary}
\email{csvincze@science.unideb.hu}
\author{Márk Oláh}
\address{Institute of Mathematics, University of Debrecen, Doctoral School of Mathematical and Computational Sciences, H-4002 Debrecen, P.O.Box 400, Hungary}
\email{olma4000@gmail.com}
\keywords{Finsler spaces, Generalized Berwald spaces, Intrinsic Geometry, Randers Space, Extremal compatible linear connection.}
\subjclass{53C60, 58B20}
\begin{document}

\begin{abstract}
A linear connection on a Finsler manifold is called compatible to the metric if its parallel transports preserve the Finslerian length of tangent vectors. Generalized Berwald manifolds are Finsler manifolds equipped with a compatible linear connection. Since the compatibility to the Finslerian metric does not imply the unicity of the linear connection in general, the first step of checking the existence of compatible linear connections on a Finsler manifold is to choose the best one to look for. A reasonable choice is introduced in \cite{V14} called the extremal compatible linear connection, which has torsion of minimal norm at each point. 

Randers metrics are special Finsler metrics that can be written as the sum of a Riemannian metric and a 1-form (they are "translates" of Riemannian metrics). In this paper, we investigate the compatibility equations for a linear connection to a Randers metric. Since a compatible linear connection is uniquely determined by its torsion, we transform the compatibility equations by taking the torsion components as variables. We determine when these equations have solutions, i.e. when the Randers space becomes a generalized Berwald space admitting a compatible linear connection. Describing all of them, we can select the extremal connection with the norm minimizing property. As a consequence, we obtain the characterization theorem in \cite{Vin1}: a Randers space is a non-Riemannian generalized Berwald space if and only if the norm of the perturbating term with respect to the Riemannian part of the metric is a positive constant.
\end{abstract}

\maketitle
\footnotetext[1]{Cs. Vincze is supported by the EFOP-3.6.1-16-2016-00022 project. The project is co-financed by the European Union and the European Social Fund.}
\footnotetext[2]{Cs. Vincze and M. Oláh are also supported by TKA-DAAD 307818.}

\section{Notations and terminology}

Let $M$ be a differentiable manifold with local coordinates $u^1, \ldots, u^n.$ The induced coordinate system of the tangent manifold $TM$ consists of the functions $x^1, \ldots, x^n$ and $y^1, \ldots, y^n$. For any $v\in T_pM$, $x^i(v):=u^i\circ \pi (v)=u^i(p)=:p^i$ and $y^i(v)=v(u^i)$, where $i=1, \ldots, n$ and $\pi \colon TM \to M$ is the canonical projection.
 
A Finsler metric is a continuous function $F\colon TM\to \mathbb{R}$ satisfying the following conditions:  $\displaystyle{F}$ is smooth on the complement of the zero section (regularity), $\displaystyle{F(tv)=tF(v)}$ for all $\displaystyle{t> 0}$ (positive homogeneity) and the Hessian 
\[ g_{ij}=\frac{\partial^2 E}{\partial y^i \partial y^j} \]
of the energy function $E=F^2/2$ is positive definite at all nonzero elements $\displaystyle{v\in T_pM}$ (strong convexity). The pair $(M,F)$ is called a Finsler manifold.

\begin{Def} Let $(M,F)$ be a Finsler manifold. A linear connection $\nabla$ on $M$ is \emph{compatible} to $F$ if the parallel transports with respect to $\nabla$ preserve the Finslerian length of tangent vectors. If $\nabla$ is a compatible linear connection to $F$, then the triplet $(M,F,\nabla)$ is called a generalized Berwald manifold.
\end{Def}

Let $\nabla$ be a linear connection on $M$ compatible to $F$, which means that $F$ is constant along parallel vector fields. Denoting an arbitrary parallel vector field along the curve $c\colon [0,1]\to M$ by $X$,
\[ (F \circ X)'=(x^k\circ X)'{\frac{\partial F}{\partial x^k}}\circ X+(y^k \circ X)'{\frac{\partial F}{\partial y^k}}\circ X. \]
Here we have that $(x^k\circ X)'= (u^k \circ \pi \circ X)'= (u^k \circ c)' = {c^k}'$ and the function $y^k \circ X=: X^k$ satisfies the differential equation
\[ {X^k}' + X^{j} {c^i}' \Gamma_{ij}^{k} \circ c =0 \] 
of parallel vector fields, i.e. ${X^k}'=-{c^i}'  X^j  \Gamma_{ij}^k\circ c$. Therefore 
\[ (F \circ X)'={c^i}'\bigg(\frac{\partial F}{\partial x^i}-y^j {\Gamma}_{ij}^{k}\circ \pi \frac{\partial F}{\partial y^k}\bigg)\circ X. \]
So the compatibility of $\nabla$ is equivalent to
\begin{equation}
\label{compeqnatur}
\frac{\partial F}{\partial x^i}-y^j {\Gamma}^k_{ij}\circ \pi \frac{\partial F}{\partial y^k}=0 \hspace{1cm} (i=1,\dots,n).
\end{equation}
We will call (\ref{compeqnatur}) the system of \textbf{compatibility equations} and the vector fields 
\[ X_i^{h}:=\frac{\partial}{\partial x^i}-y^j {\Gamma}^k_{ij}\circ \pi \frac{\partial}{\partial y^k} \hspace{1cm} (i=1,\dots,n) \]
acting on the Finslerian metric in the compatibility equations are called \textbf{horizontal vector fields}. To sum up, the linear connection $\nabla$ determines (by its connection parameters) the horizontal vector fields $X_1^{h}, \dots, X_n^{h}$, and the vanishing of the derivatives of $F$ along them characterizes the compatibility of $\nabla$ to $F$. Together with $\nabla$ we also have a Riemannian metric on the manifold $M$, which is related to $F$ in a "nice way".

\begin{Def} Let $(M,F)$ be a Finsler manifold and suppose that $\gamma$ is a Riemannian metric on $M$. We will call $\gamma$ \emph{compatible} to $F$ if every linear connection compatible to $F$ is also compatible to $\gamma$.
\end{Def}

In other words, the parallel transports preserving the Finslerian length preserve the Riemannian length, too. Such a Riemannian metric always exists: it is well-known that the so-called averaged Riemannian metric given by integration over the indicatrices has this property; for the details see \cite{V5} and \cite{V11}. Unfortunately, it is hard to compute in practice, so instead (whenever possible) we should find a compatible Riemannian metric which is easier to handle. For some other candidates, see \cite{Mat1} and \cite{Cramp}. In what follows, $\gamma$ denotes an arbitrary (previously fixed) compatible Riemannian metric to $F$, $\nabla^*$ is its (uniquely existing) L\'{e}vi-Civita connection and ${\Gamma}^{k*}_{ij}$ are the (symmetric) connection parameters of $\nabla^*$. The horizontal vector fields generated by $\nabla^*$ are
\[ X_i^{h*}:=\frac{\partial}{\partial x^i}-y^j {\Gamma}^{k*}_{ij}\circ \pi \frac{\partial}{\partial y^k}  \hspace{1cm} (i=1,\dots,n). \]

\section{The compatibility equations in terms of the torsion components}

Following  \cite{V14}, we are going to transform the system \eqref{compeqnatur} of the compatibility equations for $\nabla$ by taking the torsion components as variables. The Christoffel process gives the following formula:
\begin{equation}
\label{torsion}
\Gamma_{ij}^r=\Gamma_{ij}^{*r}-\frac{1}{2}\left(T^{l}_{jk}\gamma^{kr}\gamma_{il}+T^{l}_{ik}\gamma^{kr}\gamma_{jl}-T_{ij}^r\right).
\end{equation}
In terms of the torsion components,  the system \eqref{compeqnatur} of the compatibility equations for $\nabla$ can be written into the form 
\[ X_i^{h^*}F+\frac{1}{2}y^j \left(T^{l}_{jk}\gamma^{kr}\gamma_{il}+T^{l}_{ik}\gamma^{kr}\gamma_{jl}-T_{ij}^r\right)\circ \pi \frac{\partial F}{\partial y^r}=0\ \  (i=1, \ldots,n). \]
Taking a nonzero vector $v \in T_p M$, we have the inhomogeneous system of linear equations 
\[ y^j (v)\left(T^{l}_{jk}\gamma^{kr}\gamma_{il}+T^{l}_{ik}\gamma^{kr}\gamma_{jl}-T_{ij}^r\right)(p) \frac{\partial F}{\partial y^r}(v)=-2X_i^{h^*}F(v) \hspace{1cm} (i=1, \dots, n) \]
for the unknown scalars $T_{ab}^{c}(p)$. The set $A_p(v)$ of its solutions is an affine subspace in the vector space $\wedge^2 T_p^*M \otimes T_pM$ spanned by 
\[ du^i_p \wedge du^j_p \otimes \left( \frac{\partial}{\partial u^k}\right)_p \quad (1\leq i < j\leq n, k=1, \ldots, n) \] 
of dimension $\displaystyle{\binom{n}{2}n}$. By going through all nonzero tangent vectors $v \in T_p M$, the solution set 
$$A_p=\bigcap_{v\in T_pM\setminus \{ \bf{0}\}} A_p(v)$$
containing the restrictions of the torsion tensors of all compatible linear connections to the Cartesian product $\displaystyle{T_pM\times T_pM}$ is also an affine subspace of $\wedge^2 T_p^*M \otimes T_pM$ (as the intersection of affine subspaces). Let us rewrite these equations in a more concise form. The skew-symmetry allows us to keep only the components $T_{ab}^{c}$ with lower indices $a<b$. Denoting by $\tilde{\sigma}_{ab;i}^{c}$ the coefficient of $T_{ab}^{c}$ ($a<b$) in the $i$-th equation,
\[ \tilde{\sigma}_{ab;i}^{c}=\left(y^a \gamma^{br}-y^b\gamma^{ar}\right)\frac{\partial F}{\partial y^r}\gamma_{ic}+\left(\delta_i^a \gamma^{br}-\delta_i^b\gamma^{ar}\right)\frac{\partial F}{\partial y^r}y^j\gamma_{jc}-\left(\delta_i^a y^b-\delta_i^b y^a\right)\frac{\partial F}{\partial y^c}. \]
If $\displaystyle{\partial/\partial u^1, \ldots, \partial/\partial u^n}$ is an orthonormal basis at $p\in M$ with respect to the compatible Riemannian metric $\gamma$, then
\begin{equation}
\label{coeffort}
\tilde{\sigma}_{ab;i}^{c}=\delta_i^c\left(y^a \frac{\partial F}{\partial y^b}-y^b\frac{\partial F}{\partial y^a}\right)+\delta_i^a \left(y^c\frac{\partial F}{\partial y^b}-y^b\frac{\partial F}{\partial y^c}\right)-\delta_i^b \left(y^c\frac{\partial F}{\partial y^a}-y^a\frac{\partial F}{\partial y^c}\right)
\end{equation}
and the compatibility equations are
\begin{equation}
\label{compeqinnprod}
 \sum_{a<b,c} \tilde{\sigma}_{ab;i}^{c} T_{ab}^{c} = -2X_i^{h^*}F \hspace{1cm} (i=1, \dots, n),
\end{equation}
where the summation symbol means summing over the following indices:
\[ \{ (a,b,c) \in \nn_n \times \nn_n \times \nn_n \, | \, a < b \} \hspace{1cm} (\nn_n:=\{1, \dots, n\}). \]

\section{The extremal compatible linear connection}
According to the previous section, the pointwise solutions of the compatibility equations form affine subspaces in $\wedge^2 T_p^*M \otimes T_pM$ ($p \in M$). So there is no reason for a global solution (if there is a solution at all) to be unique. Therefore the first step of checking the existence of compatible linear connections on a Finsler manifold is to choose the best one to look for. A  reasonable choice is introduced in \cite{V14}.

\begin{Def} Let $(M,F)$ be a Finsler manifold with a compatible Riemannian metric $\gamma$, and suppose that the coordinate vector fields $\partial/\partial u^1, \ldots, \partial/\partial u^n$ form an orthonormal basis at the point $p\in M$ with respect to $\gamma$. We introduce a Riemannian metric on $\wedge^2 T_p^*M \otimes T_pM$ in the following way.

If $\displaystyle{T=\sum_{i<j,k} T_{ij}^k du^i \wedge du^j \otimes \frac{\partial}{\partial u^k}}$, then
\begin{equation}
\label{torsionnorm}
\dotprod{T_p, S_p}: =\sum_{i<j, k} T_{ij}^k(p)S_{ij}^k(p) \hspace{1cm} \text{and}  \hspace{1cm} \norm{T_p}^2=  \sum_{i<j, k} {T_{ij}^k(p)}^2 .
\end{equation}
The extremal compatible linear connection on $M$ is the uniquely determined compatible linear connection whose torsion minimizes its pointwise norm.
\end{Def}

The system 
\[ du^i \wedge du^j \otimes \frac{\partial}{\partial u^k} \quad (1\leq i< j \leq n, k=1, \ldots, n) \]
is an orthonormal basis at the point $p\in M$ with respect to the scalar product introduced above. Since the pointwise solution set $A_p$ of the compatibility equations is an affine (especially, a convex) set, the element with the minimal norm (the closest element to the origin) is uniquely determined. For the details, see \cite{V14}.

\section{Compatible linear connections of Randers spaces}

\begin{Def} A Finsler metric is called \emph{Randers metric} if it has the special form
\[ F(x,y) = \alpha(x,y) + \beta(x,y),  \]
where
\begin{itemize}
\item $\alpha$ is a norm coming from a Riemannian metric on $M$, i.e. if the matrix of the Riemannian metric in a local basis $\partial/\partial u^1, \ldots, \partial/\partial u^n$ is $\alpha_{ij}$, then $\alpha(x,y)=\sqrt{\alpha_{ij}(x)y^{i}y^{j}}$,
\item $\beta$ is a 1-form, i.e. a linear functional on tangent spaces with the coordinate representation $\beta(x,y)=\beta_j(x)y^j$. 
\end{itemize}
\end{Def}

Note that both the metric components of the Riemannian part and the components of the perturbating term are considered on the tangent manifold as composite functions $\alpha_{ij}(x)$ and $\beta_k(x)$, where $x=(x^1, \ldots, x^n)$. It is well-known \cite{Vin1} that its Riemannian part is compatible to the Randers metric, so $\gamma:=\alpha$ is a convenient choice for a Riemannian environment because $\alpha$ is directly given by the Randers metric. 

In what follows, we are going to investigate the existence and the intrinsic expression of compatible linear connections on Randers spaces, i.e. when these spaces can be considered as generalized Berwald manifolds. We suppose that $M$ is connected and $\beta_p\neq 0$ at each point of the manifold. Otherwise, if there is a point $p \in M$ of a Randers manifold admitting compatible linear connections such that $\beta_p=0$, then the indicatrix at $p$ is quadratic and the same holds at all points of the (connected) manifold because the indicatrices are related by linear parallel translations. Therefore the metric is Riemannian, i.e. the compatibility of a linear connection means that it is metrical in the usual sense and the extremal compatible linear connection is obviously the L\'{e}vi-Civita connection of the Riemannian metric. 

\subsection{The compatibility equations}

Let a point $p\in M$ be given. In order to make the computations easier, we choose local coordinates around $p$ such that the basis $\partial/\partial u^1, \ldots, \partial/\partial u^n$ satisfies the following properties:
\begin{itemize}
\item $\displaystyle{(\partial/\partial u^1)_p, \ldots, (\partial/\partial u^n)_p}$ is an orthonormal basis of $T_p M$ with respect to $\alpha$, so $\alpha_{ij}(p)=\delta_{ij}$;
\item $\beta_1(p) = \dots = \beta_{n-1}(p)=0$ and $\beta_n(p) \neq 0$, i.e. the coordinate vector fields from $1$ to $n-1$ span the kernel of the linear functional  $\beta_p \colon T_pM \to \mathbb{R}$. 
\end{itemize}

Under these choices of the coordinate vector fields, the form of the metric at $p$ is
\[F(x,y)=\sqrt{\delta_{ij}y^i y^j} +\beta_n(x)y^n = \sqrt{(y^1)^2+\dots+(y^n)^2} + \beta_n(x)y^n. \]
The partial derivatives of $F$ with respect to the vectorial directions are
\[\dfrac{\partial F}{\partial y^k}(x,y)= \dfrac{y^k}{\sqrt{\sum_{i=1}^{n}(y^i)^2}} + \delta^n_k \beta_n(x)  \]
and, consequently, 
\[ y^a \frac{\partial F}{\partial y^b}-y^b\frac{\partial F}{\partial y^a} =
 y^a \left( \dfrac{y^b}{\sqrt{\sum_{i=1}^{n}(y^i)^2}} + \delta^n_b \beta_n \right) -
 y^b \left( \dfrac{y^a}{\sqrt{\sum_{i=1}^{n}(y^i)^2}} + \delta^n_a \beta_n \right) =\]
 \[\left(y^a \delta^n_b - y^b \delta^n_a \right)\beta_n. \]
Plugging it in formula \eqref{coeffort} and using that $a < b \leq n$ we have 
\[ \begin{gathered}
\tilde{\sigma}_{ab;i}^{c} = \left[
\delta_i^c \left(y^a \delta_b^n - y^b \delta_a^n \right)+
\delta_i^a \left(y^c \delta_b^n - y^b \delta_c^n \right)-
\delta_i^b \left(y^c \delta_a^n - y^a \delta_c^n \right) \right] \beta_n = \\
\left[ \left( -\delta_i^c y^b - \delta_i^b y^c \right) \delta_a^n +
\left( \delta_i^c y^a + \delta_i^a y^c \right) \delta_b^n +
\left( \delta_i^b y^a -\delta_i^a y^b \right) \delta_c^n \right] \beta_n = \\
 \left[ \left( \delta_i^c y^a + \delta_i^a y^c \right) \delta_b^n +
\left( \delta_i^b y^a -\delta_i^a y^b \right) \delta_c^n \right] \beta_n.
\end{gathered} \]
Writing $\sigma_{ab;i}^{c} := \tilde{\sigma}_{ab;i}^{c}/\beta_n$,
\begin{equation}
\label{coeffranders2}
\begin{gathered}
\sigma_{ab;i}^{c} = \left( \delta_i^c y^a + \delta_i^a y^c \right) \delta_b^n +
\left( \delta_i^b y^a -\delta_i^a y^b \right) \delta_c^n,
\end{gathered}
\end{equation} 
and the compatibility equations at a given point $p\in M$ are
\begin{equation}
\label{compeqinnprodranders}
 \sum_{a<b,c} \sigma_{ab;i}^{c} T_{ab}^{c} = - \frac{2}{\beta_n} X_i^{h^*} F \hspace{1cm} (i=1, \dots, n).
\end{equation}

\begin{Lem} \label{RHS} For a given point $p\in M$, 
\begin{equation} 
- \frac{2}{\beta_n} X_i^{h^*} F = 2 \left( {\Gamma}^{n*}_{ij}\circ \pi - \frac{1}{\beta_n} \frac{\partial \beta_j}{\partial x^i} \right)y^j \hspace{1cm} (i=1, \dots, n),
\end{equation}
where the right hand side is a linear expression in the coordinate functions $y^j$.
\end{Lem}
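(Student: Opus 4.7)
The plan is to exploit the decomposition $F=\alpha+\beta$ and the linearity of the horizontal derivation $X_i^{h^*}$. First I would note that since $\nabla^*$ is the Levi-Civita connection of the Riemannian metric $\alpha$, its parallel transports preserve $\alpha$-lengths; equivalently, as a Finsler metric $\alpha$ satisfies the compatibility equations \eqref{compeqnatur} with $\Gamma_{ij}^k = \Gamma_{ij}^{*k}$. Hence $X_i^{h^*}\alpha = 0$ on the whole slit tangent bundle, and consequently
\[
X_i^{h^*}F \;=\; X_i^{h^*}\beta \;=\; \frac{\partial \beta}{\partial x^i} - y^j\,\Gamma^{k*}_{ij}\!\circ\pi\,\frac{\partial \beta}{\partial y^k}.
\]

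Next, I would use the coordinate expression $\beta(x,y)=\beta_k(x)y^k$, which gives $\partial\beta/\partial x^i=(\partial\beta_k/\partial x^i)\,y^k$ and $\partial\beta/\partial y^k=\beta_k$. Substituting,
\[
X_i^{h^*}F \;=\; \frac{\partial \beta_j}{\partial x^i}\,y^j \;-\; y^j\,\Gamma^{k*}_{ij}\!\circ\pi\,\beta_k.
\]
At the chosen point $p$, the adapted coordinates ensure $\beta_1(p)=\dots=\beta_{n-1}(p)=0$, so only the term with $k=n$ survives in the second sum, yielding
\[
X_i^{h^*}F(v) \;=\; \Bigl(\frac{\partial \beta_j}{\partial x^i}(p) - \beta_n(p)\,\Gamma^{n*}_{ij}(p)\Bigr)y^j(v)
\]
for every $v\in T_pM$. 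Dividing by $-\beta_n(p)/2$ (recall $\beta_n(p)\neq 0$) and rearranging gives exactly the claimed identity, and the right-hand side is manifestly linear in the $y^j$.

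The only possible obstacle is the step asserting $X_i^{h^*}\alpha=0$; I would justify it in one line by recalling that $\nabla^*$, being the Levi-Civita connection of $\alpha$, is metrical in the classical sense, so the induced horizontal vector fields annihilate the Finsler function $\alpha=\sqrt{\alpha_{ij}y^iy^j}$. Everything else is a direct substitution into the definition of $X_i^{h^*}$, so no further computation is required.
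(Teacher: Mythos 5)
Your proposal is correct and follows essentially the same route as the paper: split $F=\alpha+\beta$, kill the $\alpha$-term via the metricity of $\nabla^*$, differentiate $\beta_k(x)y^k$ directly, and use $\beta_1(p)=\dots=\beta_{n-1}(p)=0$ to collapse the sum over $k$ to the single term $k=n$ before dividing by $-\beta_n/2$. Your version merely makes explicit two points the paper leaves implicit (the justification of $X_i^{h^*}\alpha=0$ and the collapse of the $k$-sum), which is fine.
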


\begin{proof} Since $\nabla^*$ is the L\'{e}vi-Civita connection of $\alpha$, we have
\[ X_i^{h^*} F=X_i^{h^*}\alpha + X_i^{h^*} \beta = X_i^{h^*} \beta. \]
Furthermore,
\[ X_i^{h^*} \beta = \frac{\partial \beta_s y^s}{\partial x^i}-y^j {\Gamma}^{k*}_{ij}\circ \pi \frac{\partial \beta_s y^s}{\partial y^k} = \]
\[\frac{\partial \beta_s}{\partial x^i}y^s-y^j {\Gamma}^{k*}_{ij}\circ \pi  \beta_k = y^j \left(  \frac{\partial \beta_j}{\partial x^i} - \beta_n \Gamma^{n*}_{ij} \circ \pi  \right).  \qedhere \]
\end{proof}

\begin{Cor} Introducing the notations
\begin{equation} \label{C}
C_{j;i}:= \Gamma^{n*}_{ij}\circ \pi - \frac{1}{\beta_n} \frac{\partial \beta_j}{\partial x^i},
\end{equation}
the compatibility equations are
\begin{equation}
\label{compeqinnprodranders2}
\sum_{a<b,c} \sigma_{ab;i}^{c} T_{ab}^{c} = 2 C_{j;i}y^j \hspace{1cm} (i=1, \dots, n).
\end{equation}
\end{Cor}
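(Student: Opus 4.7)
The plan is to use Lemma \ref{RHS} and then apply the definition \eqref{C} as a purely notational substitution. Starting from the compatibility equations \eqref{compeqinnprodranders} at the given point $p \in M$, namely
\[ \sum_{a<b,c} \sigma_{ab;i}^{c} T_{ab}^{c} = -\frac{2}{\beta_n} X_i^{h^*} F \qquad (i=1,\dots,n), \]
I would invoke Lemma \ref{RHS} to rewrite the right-hand side as $2\left(\Gamma^{n*}_{ij}\circ \pi - \frac{1}{\beta_n}\frac{\partial \beta_j}{\partial x^i}\right) y^j$. Recognizing the parenthesized quantity as $C_{j;i}$ by \eqref{C}, the identity \eqref{compeqinnprodranders2} follows at once.

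No real obstacle arises here, since the computation has already been carried out in the proof of Lemma \ref{RHS}; the corollary merely collects the Levi-Civita connection parameters of $\alpha$ together with the first-order variation of the perturbating term $\beta$ into a single scalar $C_{j;i}$ on $M$. The value of this repackaging is that it exhibits the right-hand side of the system as a linear expression in the fibre coordinates $y^1, \dots, y^n$ with coefficients depending only on the base point $p$. This is precisely the form needed for the subsequent pointwise linear-algebraic analysis of the solution set $A_p$ and for the eventual selection of the extremal compatible linear connection via the norm \eqref{torsionnorm}.
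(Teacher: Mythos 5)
Your proposal is correct and matches the paper's (implicit) argument exactly: the corollary is an immediate combination of equation \eqref{compeqinnprodranders} with Lemma \ref{RHS}, followed by the purely notational substitution \eqref{C}. Nothing further is needed.
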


\subsection{The coefficients of the torsion components} Let us arrange the components $T_{ab}^{c}$ into blocks such that all components with the same lower indices $(a,b)$ are in the same block, labeled by the index $(a,b)$. The number of blocks is $\displaystyle{\binom{n}{2}}$. In a given block, we order the components in an increasing way by the index $c$. 

\begin{Def} Let us call the first $\binom{n-1}{2}$ blocks whose indices $(a,b)$ does not contain the number $n$ \emph{front blocks}, and the remaining $n-1$ blocks  with indices $a < b=n$ \emph{rear blocks}. The last elements of the form $T_{ab}^{n}$ in the blocks are called \emph{tail}s, and a block without its tail is called a \emph{short block}.
\end{Def}

For example, in case of $n=4$, the components are arranged in the following way:

\begin{table}[h!]
\setlength{\tabcolsep}{2pt}
\bgroup
\def\arraystretch{1.5}
\begin{tabular}{|c|c|c|c|c|c|c|c|c|c|c|c|}
\hline 
\multicolumn{6}{|c|}{\text{front blocks}} &
\multicolumn{6}{|c|}{\text{rear blocks}} \\
\hline
\multicolumn{2}{|c|}{\text{block (1,2)}} &
\multicolumn{2}{|c|}{\text{block (1,3)}} &
\multicolumn{2}{|c|}{\text{block (2,3)}} &
\multicolumn{2}{|c|}{\text{block (1,4)}} &
\multicolumn{2}{|c|}{\text{block (2,4)}} &
\multicolumn{2}{|c|}{\text{block (3,4)}} \\
\hline 
\text{short b.} & \text{tail} &
\text{short b.} & \text{tail} &
\text{short b.} & \text{tail} &
\text{short b.} & \text{tail} &
\text{short b.} & \text{tail} &
\text{short b.} & \text{tail} \\
\hline
$T_{12}^{1} \, T_{12}^{2} \, T_{12}^{3}$ & $T_{12}^{4}$ &
$T_{13}^{1} \, T_{13}^{2} \, T_{13}^{3}$ & $T_{13}^{4}$ &
$T_{23}^{1} \, T_{23}^{2} \, T_{23}^{3}$ & $T_{23}^{4}$ &
$T_{14}^{1} \, T_{14}^{2} \, T_{14}^{3}$ & $T_{14}^{4}$ &
$T_{24}^{1} \, T_{24}^{2} \, T_{24}^{3}$ & $T_{24}^{4}$ &
$T_{34}^{1} \, T_{34}^{2} \, T_{34}^{3}$ & $T_{34}^{4}$ \\
\hline 
\end{tabular}
\egroup
\end{table}

Let us examine what the coefficients of the components $T_{ab}^{c}$ look like. By saying that a term appears in an equation we mean that its coefficient is not zero. Using \eqref{coeffranders2}, we have the following expressions:
\begin{itemize}
\item[I.] $\sigma_{ab;i}^{c}=0$ for any $T_{ab}^{c}$ ($a<b<n, c<n$) in a \textbf{front short block} because all indices are different from $n$.
\item[II.] $\sigma_{ab;i}^{n} = \delta_i^b y^a -\delta_i^a y^b$ for the \textbf{tail} $T_{ab}^{n}$ ($a<b<n$) \textbf{of a front block}. Therefore it appears only in two different equations of indices $i=a$ and $i=b$ with coefficients
\[ \sigma_{ab;a}^{n}=-y^b , \hspace{1cm} \sigma_{ab;b}^{n}=y^a. \]
\item[III.] $\sigma_{an;i}^{c} = \delta_i^c y^a + \delta_i^a y^c$ for any $T_{an}^{c}$ ($a<n, c<n$) in a \textbf{rear short block}.  We have two cases:
 \begin{itemize}
 \item if $a=c$, then the \textbf{diagonal component} $T_{an}^{a}$ appears only in the equation of index $i=a=c$ with coefficient
 \[ \sigma_{an;a}^{a} = 2y^a; \]
 \item if $a \neq c$, then $T_{an}^{c}$ appears in exactly two equations of indices $i=a$ and $i=c$ with coefficients
 \[ \sigma_{an;a}^{c}=y^c, \hspace{1cm}  \sigma_{an;c}^{c}= y^a. \]
 \end{itemize}
\item[IV.] $\sigma_{an;i}^{n} =  \delta_i^n y^a + \delta_i^a y^n + \delta_i^n y^a -\delta_i^a y^n  = 2\delta_i^n y^a$ for the \textbf{tail} $T_{an}^{n}$ ($a<n$)    \textbf{of a rear block}. So it only appears in the last equation of index $n$ with coefficient $\sigma_{an;n}^{n}= 2y^a$.
\end{itemize}

According to I-IV, none of the components of front short blocks appear in the equations. So we have the front tails and the rear blocks remaining, together
\[ \binom{n-1}{2}+(n-1)n = \frac{1}{2}(n-1)(3n-2) \]
tensor components. For example, in case of $n=2$ we have 2, in case of $n=3$ we have 7 and in case of $n=4$ we have 15 unknown components.

\subsection{The $n$-th compatibility equation} The $n$-th compatibility equation is much more simple than the others.

\begin{Lem}
\label{cool01} In the $n$-th compatibility equation, only the rear tails $T_{an}^{n}$ have coefficients different from $0$, and $\sigma_{an;n}^{n}= 2y^a$.
\end{Lem}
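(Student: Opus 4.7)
The plan is to read off the claim directly from the classification I--IV established in the preceding subsection. Setting $i = n$ in each of the four cases, I will check which coefficients $\sigma_{ab;n}^{c}$ are forced to vanish and which survive, and confirm that the only nonzero ones come from the rear tails.

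First I dispose of the front blocks. By I, every $\sigma_{ab;i}^{c}$ belonging to a front short block is zero regardless of $i$, so these contribute nothing to the $n$-th equation. For a front tail $T_{ab}^{n}$ with $a<b<n$, case II gives $\sigma_{ab;i}^{n} = \delta_i^b y^a - \delta_i^a y^b$; plugging in $i=n$, both Kronecker deltas vanish since $a,b<n$, so the coefficient is $0$. Hence no component from a front block (short or tail) appears in the $n$-th equation.

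Next I turn to the rear short blocks. For a diagonal component $T_{an}^{a}$ (with $a<n$), case III says it appears only in the equation $i=a$, and $a \neq n$; so it drops out of the $n$-th equation. For an off-diagonal component $T_{an}^{c}$ with $a \neq c$ and both $a,c<n$, case III says it appears only in equations $i=a$ and $i=c$, neither of which is the $n$-th. So the rear short blocks also contribute nothing.

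Finally, for the rear tails $T_{an}^{n}$ ($a<n$), case IV gives $\sigma_{an;i}^{n} = 2\delta_i^n y^a$, so $\sigma_{an;n}^{n} = 2y^a$ while all other equations get zero coefficient. Combining the three steps yields exactly the statement of the lemma. No step is a real obstacle; the argument is just a careful bookkeeping exercise on the already-computed coefficients, so the main thing is to keep the four cases cleanly separated and to record explicitly that the indices $a,b,c$ forced by being in a front block are all strictly less than $n$.
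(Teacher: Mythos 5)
Your proposal is correct and follows exactly the paper's own argument: both proofs run through the case analysis I--IV with $i=n$ and observe that the front blocks and rear short blocks only contribute to equations of index strictly less than $n$, leaving only the rear tails with coefficient $2y^a$ from case IV. Your version is slightly more explicit in substituting $i=n$ into the formulas, but the substance is identical.
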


\begin{proof} We have seen in IV. that $\sigma_{an;n}^{n}= 2y^a$. What about the others?
\begin{itemize}
\item In short front blocks every coefficient is $0$.
\item The tails $T_{ab}^{n}$ of front blocks have nonzero coefficients in equations of indices $i=a$ and $i=b$, but here $a<b<n$.
\item Elements $T_{an}^{c}$ of rear short blocks have nonzero coefficients in equations of indices $i=a$ and $i=c$, but here $a<n$ and $c<n$. \qedhere
\end{itemize}
\end{proof}

\begin{Prop}
\label{cool02} The $n$-th compatibility equation divided by 2 is
\[ \sum_{a=1}^{n-1} y^a T_{an}^{n} = \sum_{a=1}^{n} C_{a;n} y^a. \]
\end{Prop}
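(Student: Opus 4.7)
The plan is to specialize the general compatibility equation \eqref{compeqinnprodranders2} to the index $i = n$ and then collapse the triple sum on the left using the coefficient classification already established by Lemma \ref{cool01}.

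First I would instantiate \eqref{compeqinnprodranders2} at $i = n$, obtaining
\[ \sum_{a<b,c} \sigma_{ab;n}^{c}\, T_{ab}^{c} \;=\; 2 \sum_{j=1}^{n} C_{j;n}\, y^j. \]
Then I would invoke Lemma \ref{cool01}, which asserts that in the $n$-th equation the only coefficients that survive belong to the rear tails, namely $\sigma_{an;n}^{n} = 2 y^a$ for $a = 1, \ldots, n-1$, while all other $\sigma_{ab;n}^{c}$ vanish. Substituting this into the display, the left-hand side collapses to $\sum_{a=1}^{n-1} 2 y^a\, T_{an}^{n}$. Dividing through by $2$ yields
\[ \sum_{a=1}^{n-1} y^a\, T_{an}^{n} \;=\; \sum_{a=1}^{n} C_{a;n}\, y^a, \]
which is exactly the claimed identity after renaming the summation index $j \mapsto a$ on the right.

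There is no genuine obstacle here: all of the combinatorial work has already been absorbed into the statement of Lemma \ref{cool01} (which itself is a direct case-check against the four classes I--IV of coefficient patterns derived from \eqref{coeffranders2}). Once that lemma is in hand, the present proposition reduces to a single substitution and a division by $2$, so the proof is essentially a bookkeeping step rather than a computation.
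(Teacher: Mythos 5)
Your proposal is correct and matches the paper's proof exactly: the paper likewise obtains the left-hand side from Lemma \ref{cool01} and the right-hand side from \eqref{compeqinnprodranders2}, then divides by $2$. You have simply written out in full the substitution that the paper states in one line.
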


\begin{proof} The left hand side is given by Lemma \ref{cool01}, and the right hand side is given by \eqref{compeqinnprodranders2}.
\end{proof}

\begin{Cor} \label{correartail} The $n$-th compatibility equation is solvable if and only if $C_{n;n}=0$ and the rear tails are uniquely determined:
\begin{equation}
\label{reartail}
T_{an}^{n}(p) = C_{a;n}(x) \overset{\eqref{C}}{=} {\Gamma}^{n*}_{an}(p) - \frac{1}{\beta_n(x)}  \frac{\partial \beta_a}{\partial x^n}(x)\hspace{1cm} (a=1, \dots, n-1).
\end{equation}
\end{Cor}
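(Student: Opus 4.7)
The plan is to read off the corollary directly from Proposition \ref{cool02} by comparing coefficients of the linearly independent functions $y^1,\dots,y^n$ on the two sides of
\[ \sum_{a=1}^{n-1} y^a T_{an}^{n} = \sum_{a=1}^{n} C_{a;n} y^a. \]
The compatibility equations must hold at the point $p$ for every nonzero tangent vector $v\in T_pM$, so both sides are to be viewed as polynomial (in fact linear) functions of the fibre coordinates $y^1,\dots,y^n$; the unknowns $T_{an}^n(p)$ and the numbers $C_{a;n}(p)$ are constants with respect to $y$.

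First I would observe that the left hand side is a linear combination only of $y^1,\dots,y^{n-1}$, with no $y^n$ term, since the summation runs over $a=1,\dots,n-1$. Recall from Lemma \ref{RHS} that the right hand side is genuinely linear in $y^1,\dots,y^n$ (the coefficients $C_{j;i}$ are evaluated at $p$ and are independent of $y$). Equating the two sides as polynomials in $y$ and matching the coefficient of $y^n$ therefore forces $C_{n;n}=0$, which is the stated necessary condition for solvability. Conversely, if $C_{n;n}=0$, then the right hand side also contains only $y^1,\dots,y^{n-1}$, and matching the coefficient of $y^a$ for $a=1,\dots,n-1$ gives exactly
\[ T_{an}^{n}(p) = C_{a;n}(p) \quad (a=1,\dots,n-1), \]
which is both necessary and sufficient for the $n$-th equation to hold identically in $y$. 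The explicit formula \eqref{reartail} is then obtained by substituting the definition \eqref{C} of $C_{a;n}$.

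There is no real obstacle here: the only thing to keep in mind is the logical status of the equation, namely that it is an identity in the fibre variables $y^j$ and therefore reduces, via the linear independence of $y^1,\dots,y^n$ as coordinate functions on $T_pM$, to a system of scalar equations on the constants $T_{ab}^c(p)$. Once this is noted, the argument is a one-line coefficient comparison.
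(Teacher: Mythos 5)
Your argument is correct and is exactly the paper's proof: both reduce the identity from Proposition \ref{cool02} to a coefficient comparison in the fibre variables $y^1,\dots,y^n$, with the absence of a $y^n$ term on the left forcing $C_{n;n}=0$ and the remaining coefficients giving \eqref{reartail}. You have merely spelled out the linear-independence step that the paper leaves implicit.
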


\begin{proof} By Proposition \ref{cool02}, both sides are linear expressions of the coordinate functions $y^a$. Comparing the coefficients of $y^a$ ($a \neq n$), we have both the solvability condition and formula (\ref{reartail}) immediately. 
\end{proof}

By Corollary \ref{correartail} we exploited all information the $n$-th equation has. From now on we consider the equations of indices $i=1, \dots, n-1$.

\subsection{The first $n-1$ compatibility equations} Let us consider the $i$-th compatibility equation ($i=1, \dots, n-1$) and rearrange its left hand side with respect to the coordinate functions $y^k$ ($k=1, \ldots, n$).

\begin{Lem}\label{cool03} For any $i=1, \ldots, n-1$, the $i$-th compatibility equation contains the coordinate function $y^k$ on the left hand side with coefficient 
$$\kappa_{k;i}=\left\{
\begin{array}{rl}
T_{ki}^{n}+T_{in}^{k} +T_{kn}^{i} & \textrm{if} \ \ k<i,\\
2 T_{in}^{i} & \textrm{if} \ \ k=i,\\
-T_{ik}^{n}+T_{in}^{k} +T_{kn}^{i} & \textrm{if} \ \ i<k<n,\\
0 & \textrm{if} \ \ $k=n$.
\end{array}
\right.$$
\end{Lem}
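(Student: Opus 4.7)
The plan is to walk through the classification I--IV and list precisely which torsion components $T_{ab}^{c}$ enter the $i$-th compatibility equation (for $1\le i\le n-1$) and what $y^k$ each such contribution multiplies, and then collect the coefficient of $y^k$ case by case. Since front short blocks contribute nothing by I, and rear tails $T_{an}^{n}$ live only in the $n$-th equation by IV (already handled in Lemma \ref{cool01}), only the front tails (II) and rear short blocks (III) are relevant on the left hand side of the $i$-th equation with $i<n$.

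From II, a front tail $T_{ab}^{n}$ (with $a<b<n$) contributes $-T_{ab}^{n}y^b$ in the equation of index $a$ and $T_{ab}^{n}y^a$ in the equation of index $b$. Restricting to the $i$-th equation, this yields $-T_{ib}^{n}y^b$ for every $b$ with $i<b<n$, and $T_{ai}^{n}y^a$ for every $a<i$. From III, a rear short block component $T_{an}^{c}$ (with $a<n$, $c<n$) contributes in the equations of indices $a$ and $c$, with the diagonal case $a=c$ producing the single term $2T_{an}^{a}y^a$ in the equation of index $a$. Restricting again to the $i$-th equation, III yields $2T_{in}^{i}y^i$ from the diagonal, $T_{in}^{c}y^c$ for each $c\neq i$ with $c<n$ (taking $a=i$), and $T_{an}^{i}y^a$ for each $a\neq i$ with $a<n$ (taking $c=i$).

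I then read off $\kappa_{k;i}$ by distinguishing the four cases. For $k=n$ none of the above produces $y^n$, since in II the index $b<n$ and in III the superscript $c<n$, so $\kappa_{n;i}=0$. For $k=i$, II contributes nothing (its outputs $y^b$ and $y^a$ satisfy $b>i$ and $a<i$ respectively), while III contributes only the diagonal term, giving $\kappa_{i;i}=2T_{in}^{i}$. For $i<k<n$, II contributes $-T_{ik}^{n}$ (the case $a=i$, $b=k$) and III contributes $T_{in}^{k}$ (with $c=k$) and $T_{kn}^{i}$ (with $a=k$), giving $\kappa_{k;i}=-T_{ik}^{n}+T_{in}^{k}+T_{kn}^{i}$. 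For $k<i$, the relevant case of II is now $a=k$, $b=i$, producing $+T_{ki}^{n}$, while III again gives $T_{in}^{k}$ and $T_{kn}^{i}$, whence $\kappa_{k;i}=T_{ki}^{n}+T_{in}^{k}+T_{kn}^{i}$. The only real obstacle is careful bookkeeping: not double-counting the diagonal $a=c$ term in III, and tracking the sign change in II that distinguishes the cases $k<i$ and $k>i$ and accounts for the $+T_{ki}^{n}$ vs.\ $-T_{ik}^{n}$ in the final formula.
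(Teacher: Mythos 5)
Your proof is correct and follows essentially the same route as the paper's: both arguments enumerate the contributions from the classification I--IV (discarding front short blocks and rear tails), record which $y^k$ each front tail and rear short block component multiplies in the $i$-th equation, and read off the four cases of $\kappa_{k;i}$ with the correct sign distinction between $k<i$ and $i<k<n$. No gaps.
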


\begin{proof} First of all recall that the tensor components in the short front blocks do not appear in the compatibility equations because of I. Otherwise
\begin{itemize}
\item the tails $T_{ab}^{n}$ of the front blocks have nonzero coefficients in the equations of indices $i=a$ and $i=b$. So, in the $i$-th equation only the components of the form $T_{ib}^{n}$ and $T_{ai}^{n}$ appear with coefficients $\sigma_{ib;i}^{n}=-y^b$ and $\sigma_{ai;i}^{n}=y^a$.
 \begin{itemize}
 \item $y^i$ and $y^n$ never appear here because of $a<b<n$.
 \item Otherwise, $y^k$ appears with either $-T_{ik}^{n}$ (in case of $i<k$) or $T_{ki}^{n}$ (in case of $k<i$).
 \end{itemize}
\item The elements $T_{an}^{c}$ of the rear short blocks, which are not diagonal, have nonzero coefficients in the equations of indices $i=a$ and $i=c$ ($a \neq c$). So, in the $i$-th equation only the components of the form $T_{in}^{c}$ and $T_{an}^{i}$ appear with coefficients $\sigma_{in;i}^{c}=y^c$ and $\sigma_{an;i}^{i}=y^a$.
 \begin{itemize}
 \item $y^i$ and $y^n$ never appear here because of $a \neq c$, $a<n$ and $c<n$.
 \item Otherwise $y^k$ appears with $T_{in}^{k}+T_{kn}^{i}$.
 \end{itemize}
\item The rear diagonal component $T_{in}^{i}$ appears with $\sigma_{in;i}^{i}=2y^i$ in the $i$-th equation. Therefore $y^i$ appears with coefficient $2T_{in}^{i}$.
\item Rear tails do not appear here (only in the last equation). \qedhere
\end{itemize}
\end{proof}

\begin{Prop}\label{cool04} The $i$-th compatibility equation is 
\[ \sum_{k<i} (T_{ki}^{n}+T_{in}^{k} +T_{kn}^{i})y^k +  2T_{in}^{i} y^i + \sum_{i<k<n} (-T_{ik}^{n}+T_{in}^{k} +T_{kn}^{i})y^k = \sum_{k=1}^{n} 2 C_{k;i} y^k,  \]
where $i=1, \dots, n-1$.
\end{Prop}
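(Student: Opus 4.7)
The plan is to read off both sides of equation \eqref{compeqinnprodranders2} for a fixed index $i\in\{1,\dots,n-1\}$ and rewrite the left-hand side as a polynomial in the coordinates $y^k$. The right-hand side of \eqref{compeqinnprodranders2} is already given explicitly as $2C_{j;i}y^j$, so nothing needs to be done on that side beyond renaming the summation index from $j$ to $k$ to match the notation on the left. Hence all the work is on the left-hand side $\sum_{a<b,c}\sigma_{ab;i}^{c}T_{ab}^{c}$.

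To handle the left-hand side I would invoke Lemma \ref{cool03}, which is precisely the statement that, after collecting terms, the coefficient of $y^k$ in the $i$-th equation is the quantity $\kappa_{k;i}$. Thus the left-hand side equals $\sum_{k=1}^{n}\kappa_{k;i}\,y^k$, and substituting the four cases for $\kappa_{k;i}$ gives the asserted expression: the range $k<i$ produces $T_{ki}^{n}+T_{in}^{k}+T_{kn}^{i}$, the value $k=i$ produces the single diagonal contribution $2T_{in}^{i}$, the range $i<k<n$ produces $-T_{ik}^{n}+T_{in}^{k}+T_{kn}^{i}$ (the sign change coming from the skew-symmetry of the lower indices, since for $k>i$ the front tail involved is $T_{ik}^{n}$ with $\sigma^{n}_{ik;i}=-y^k$), and $k=n$ contributes nothing.

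Putting the three nonzero ranges together yields
\[
\sum_{k<i}(T_{ki}^{n}+T_{in}^{k}+T_{kn}^{i})y^k+2T_{in}^{i}y^i+\sum_{i<k<n}(-T_{ik}^{n}+T_{in}^{k}+T_{kn}^{i})y^k,
\]
and equating with $\sum_{k=1}^{n}2C_{k;i}y^k$ from the right-hand side of \eqref{compeqinnprodranders2} finishes the proof.

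There is essentially no obstacle: the proposition is a bookkeeping consequence of Lemma \ref{cool03} combined with the form of the right-hand side established in \eqref{compeqinnprodranders2}. The only subtle point worth double-checking is the sign distinction between the $k<i$ and $i<k<n$ cases, which traces back to whether the relevant front tail is $T_{ki}^{n}$ or $T_{ik}^{n}$ and to the corresponding coefficient $\sigma^{n}_{ab;i}=\delta_i^b y^a-\delta_i^a y^b$ from item II of the case analysis.
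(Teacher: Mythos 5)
Your proposal is correct and follows exactly the paper's own argument: the paper likewise proves Proposition \ref{cool04} by citing Lemma \ref{cool03} for the left-hand side and \eqref{compeqinnprodranders2} for the right-hand side. Your additional remark about the sign distinction between the $k<i$ and $i<k<n$ cases (tracing back to $\sigma^{n}_{ab;i}=\delta_i^b y^a-\delta_i^a y^b$) is a correct elaboration of what the paper leaves implicit.
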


\begin{proof} The left hand side is given by Lemma \ref{cool03}, and the right hand side is given by \eqref{compeqinnprodranders2}.
\end{proof}

\begin{Cor} \label{correardiag} For any $i=1, \ldots, n-1$, the $i$-th compatibility equation is solvable if and only if $C_{n;i}=0$ and the rear diagonal components are uniquely determined:
\begin{equation}
\label{reardiag}
T_{an}^{a}(p) = C_{a;a}(x) \overset{\eqref{C}}{=} {\Gamma}^{n*}_{aa}(p)-\frac{1}{\beta_n(x)} \frac{\partial \beta_a}{\partial x^a}(x)\hspace{1cm} (a=1, \dots, n-1).
\end{equation}
\end{Cor}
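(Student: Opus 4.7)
The approach mirrors the proof of Corollary \ref{correartail}: I would apply Proposition \ref{cool04} and compare the coefficients of the coordinate functions $y^1, \ldots, y^n$ on both sides, noting that both are linear polynomials in $y^k$ with constant coefficients at the fixed point $p$. On the right-hand side, the coefficient of $y^k$ is $2C_{k;i}$; on the left-hand side, the coefficients are given by Lemma \ref{cool03}.

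First I would read off the coefficient of $y^n$. All three sums appearing on the left-hand side of Proposition \ref{cool04} run only over indices strictly less than $n$, so $y^n$ does not occur and its coefficient is zero. Matching with the right-hand side forces $C_{n;i}=0$, which is the claimed necessary condition.

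Next I would read off the coefficient of $y^i$: only the middle term $2T_{in}^i y^i$ contributes on the left-hand side, so the comparison yields $2T_{in}^i = 2C_{i;i}$, i.e.\ $T_{in}^i = C_{i;i}$. Substituting the definition \eqref{C} of $C_{i;i}$ then produces exactly formula \eqref{reardiag}. Letting $i$ range over $1,\ldots,n-1$ determines all the rear diagonal components $T_{an}^a$ pointwise.

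Finally, I would argue sufficiency by showing that no further obstruction arises from the remaining coefficient comparisons. For each $k\neq i,n$, matching the coefficient of $y^k$ yields a single linear equation involving the three distinct components $T_{ki}^n, T_{in}^k, T_{kn}^i$ (or $T_{ik}^n$ in place of $T_{ki}^n$ when $i<k<n$), none of which is the diagonal component $T_{in}^i$. Since such an underdetermined single equation in three unknowns is always solvable, the only restriction coming from the $i$-th equation is indeed $C_{n;i}=0$, and the diagonal value is forced as stated. There is no real obstacle; the one point to verify is simply that the diagonal and off-diagonal coefficient matches decouple, which is transparent from Lemma \ref{cool03}.
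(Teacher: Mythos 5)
Your proposal is correct and follows essentially the same route as the paper: invoking Proposition \ref{cool04} and comparing coefficients of the coordinate functions $y^k$, with the coefficient of $y^n$ yielding the solvability condition $C_{n;i}=0$ and the coefficient of $y^i$ yielding $T_{in}^i=C_{i;i}$. Your added remark that the remaining coefficient comparisons impose no further obstruction (being single underdetermined equations in the off-diagonal triplets) is a detail the paper leaves implicit but is consistent with how it treats those components in the subsequent corollary.
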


\begin{proof}  By Proposition \ref{cool04}, both sides are linear expressions of the coordinate functions $y^k$. Comparing the coefficients of $y^k$ ($k=1, \ldots, n-1)$, we have both the solvability condition and formula (\ref{reardiag}) immediately. 
\end{proof}

We have managed to express all tensor components having repeated indices. The remaining components have different indices one of which is $n$. By the condition $a<b$, every index-triplet appears exactly three times (so for $k<i<n$, we have the components $T_{ki}^{n},T_{in}^{k},T_{kn}^{i}$, one of which is a front tail and the other two are elements of the corresponding rear short blocks).

\begin{Cor} The front tails are uniquely determined:
\begin{equation}
\label{fronttail}
 T_{ab}^{n}(p)=\frac{1}{\beta_n(x)} \left( \frac{\partial \beta_b}{\partial x^a}(x) - \frac{\partial \beta_a}{\partial x^b}(x) \right),
\end{equation}
where $a<b<n$, and the components with the same indices in different positions satisfy the following equation:
\begin{equation}
\label{triplet2}
T_{an}^{c}(p) +T_{cn}^{a}(p) = 2 \Gamma^{n*}_{ac}(p) -\frac{1}{\beta_n(x)} \left( \frac{\partial \beta_c}{\partial x^a}(x) + \frac{\partial \beta_a}{\partial x^c}(x) \right)=:S_{ac}(x).
\end{equation}
\end{Cor}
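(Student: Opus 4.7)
The plan is to compare coefficients of $y^k$ on both sides of Proposition \ref{cool04} for two carefully chosen equations and then take their sum and difference. Fix a triple $a<b<n$. The $a$-th and $b$-th compatibility equations both contain the components $T_{ab}^n$, $T_{an}^b$, and $T_{bn}^a$, but in different combinations.

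First I would take the $b$-th compatibility equation and read off the coefficient of $y^a$ (this falls into the $k<i$ case of Lemma \ref{cool03} with $k=a$, $i=b$). By Proposition \ref{cool04} this yields
\[ T_{ab}^{n} + T_{bn}^{a} + T_{an}^{b} = 2C_{a;b}. \]
Next I would take the $a$-th compatibility equation and read off the coefficient of $y^b$ (now the $i<k<n$ case with $i=a$, $k=b$), obtaining
\[ -T_{ab}^{n} + T_{an}^{b} + T_{bn}^{a} = 2C_{b;a}. \]

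Adding these two relations eliminates $T_{ab}^{n}$ and gives $T_{an}^{b} + T_{bn}^{a} = C_{a;b}+C_{b;a}$. Using the definition \eqref{C} of the coefficients $C_{j;i}$ together with the symmetry $\Gamma^{n*}_{ab}=\Gamma^{n*}_{ba}$ of the Levi-Civita connection, the right hand side simplifies to
\[ C_{a;b}+C_{b;a} = 2\Gamma^{n*}_{ab}\circ\pi - \frac{1}{\beta_n}\left(\frac{\partial \beta_a}{\partial x^b} + \frac{\partial \beta_b}{\partial x^a}\right) = S_{ab}, \]
establishing \eqref{triplet2}. Subtracting the two relations instead eliminates the symmetric pair and yields $2T_{ab}^{n} = 2(C_{a;b}-C_{b;a})$; the Christoffel terms cancel by symmetry, leaving formula \eqref{fronttail}.

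There is no real obstacle here beyond bookkeeping: the statement is a direct algebraic consequence of Proposition \ref{cool04}, with the only subtlety being the use of the symmetry of $\Gamma^{*}_{ij}$ to collapse $C_{a;b}\pm C_{b;a}$ into the desired expressions. I would also briefly note that the case $k=i$ (diagonal terms) has already been treated in Corollary \ref{correardiag}, and the case $k=n$ in Corollary \ref{correartail}, so that the present corollary exhausts the remaining coefficient comparisons in the first $n-1$ compatibility equations.
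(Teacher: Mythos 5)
Your proposal is correct and follows essentially the same route as the paper: both extract the two relations $T_{ab}^{n}+T_{an}^{b}+T_{bn}^{a}=2C_{a;b}$ and $-T_{ab}^{n}+T_{an}^{b}+T_{bn}^{a}=2C_{b;a}$ from Proposition \ref{cool04} and combine them, using the symmetry of $\Gamma^{n*}_{ab}$ to simplify $C_{a;b}+C_{b;a}$. The only cosmetic difference is that the paper recovers the front tail by substituting $S_{ab}$ back into the first relation ($T_{ab}^{n}=2C_{a;b}-S_{ab}$) rather than by subtracting the two relations, which is algebraically identical.
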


\begin{proof} Let us consider the remaining components under the choice $k <i<n$. It is easy to see that component-triplets $T_{ki}^{n},T_{in}^{k},T_{kn}^{i}$ appear always together and exactly twice: 
\begin{itemize}
\item in the $i$-th equation we have $(T_{ki}^{n}+T_{in}^{k}+T_{kn}^{i})y^k$,
\item in the $k$-th equation we have $(-T_{ki}^{n}+T_{in}^{k}+T_{kn}^{i})y^i$.
\end{itemize}
By comparing them with the coefficients of $y^k$ and $y^i$ on the right hand side
\[ \begin{gathered}
\frac{\partial (i\textrm{-th eq.)}}{\partial y^k} \Rightarrow
 T_{ki}^{n}+T_{in}^{k} +T_{kn}^{i} = 2C_{k;i} \\
\frac{\partial (k\textrm{-th eq.)}}{\partial y^i} \Rightarrow -T_{ki}^{n}+T_{in}^{k}+T_{kn}^{i} = 2C_{i;k}.
\end{gathered}  \]
Adding these together, dividing by 2 and using the symmetry of $\Gamma^{k*}_{ij}$,
\[ \begin{gathered}
T_{in}^{k} +T_{kn}^{i} =  C_{k;i} + C_{i;k} \overset{\eqref{C}}{=} {\Gamma}^{n*}_{ik}\circ \pi - \frac{1}{\beta_n} \frac{\partial \beta_k}{\partial x^i} + {\Gamma}^{n*}_{ki}\circ \pi - \frac{1}{\beta_n} \frac{\partial \beta_i}{\partial x^k} = \\
2 \Gamma^{n*}_{ik} \circ \pi -\frac{1}{\beta_n} \left( \frac{\partial \beta_k}{\partial x^i} + \frac{\partial \beta_i}{\partial x^k} \right)=:S_{ik}.
\end{gathered} \]
Thus, from every component-triplet, the terms $T_{in}^{k}$ and $T_{kn}^{i}$ in the corresponding rear short blocks depend only on each other and their sum is $S_{ik}(x)$. Plugging this into the equation above, we can determine the front tails as
\[ T_{ki}^{n}= 2C_{k;i} - S_{ik} \overset{\eqref{C}}{=} 2{\Gamma}^{n*}_{ik}\circ \pi - \frac{2}{\beta_n} \frac{\partial \beta_k}{\partial x^i} - 2 \Gamma^{n*}_{ik}\circ \pi +\frac{1}{\beta_n} \left( \frac{\partial \beta_k}{\partial x^i} + \frac{\partial \beta_i}{\partial x^k} \right) \]
\[ = \frac{1}{\beta_n} \left( \frac{\partial \beta_i}{\partial x^k} - \frac{\partial \beta_k}{\partial x^i} \right). \qedhere \]
\end{proof}

\subsection{Solvability conditions} By Corollaries \ref{correartail} and \ref{correardiag}, there exists a solution of the compatibility equations at $p \in M$ if and only if
\[ C_{n;i} \overset{\eqref{C}}{=} {\Gamma}^{n*}_{in}\circ \pi - \frac{1}{\beta_n} \frac{\partial \beta_n}{\partial x^i}=0 \hspace{1cm} (i=1, \dots, n). \]
We are going to reformulate this condition in a coordinate-free way. In the following computations, the metric components of the Riemanian part and the components of the perturbating term are considered on the tangent manifold as composite functions $\alpha_{ij}(x)$ and $\beta_k(x)$, where $x=(x^1, \ldots, x^n)$. First, we compute the Christoffel symbols ${\Gamma}^{n*}_{in}$ of the L\'{e}vi-Civita connection $\nabla^*$ of $\alpha$. Using $\alpha_{ij}(p)=\delta_{ij}$, the compatibility to the metric implies that 
\begin{equation} \label{christoffel}
\frac{\partial}{\partial x^i} \alpha_{nn} = \frac{\partial}{\partial x^i} \alpha \left( \frac{\partial}{\partial x^n}, \frac{\partial}{\partial x^n} \right) = 2 \alpha \left( \Gamma_{in}^{l*}\circ \pi \frac{\partial}{\partial x^l}, \frac{\partial}{\partial x^n} \right) = 2 \Gamma_{in}^{l*}\circ \pi \alpha_{ln} = 2 \Gamma_{in}^{n*}\circ \pi.
\end{equation} 
Secondly, let us replace the 1-form $\beta$ with its dual vector $\beta^{\sharp}$:
\begin{equation} \label{dual} \frac{\partial \beta_n}{\partial x^i} = \frac{\partial (\beta^{l} \alpha_{ln})}{\partial x^i} = \frac{\partial \beta^{l}}{\partial x^i}  \alpha_{ln} + \beta^l \frac{\partial \alpha_{ln}}{\partial x^i} = \frac{\partial \beta^{n}}{\partial x^i} + \beta^n \frac{\partial \alpha_{nn}}{\partial x^i}.
\end{equation} 
Since $\alpha_{ij}(p)=\delta_{ij}$ implies that $\beta^n = \beta_n$, we have
\[ \begin{gathered}
\frac{\partial}{\partial x^i}(\beta^j \beta^k \alpha_{jk}) =
2 \frac{\partial \beta^j}{\partial x^i} \beta^k \alpha_{jk} + \beta^j \beta^k \frac{\partial \alpha_{jk}}{\partial x^i} =
2 \beta_n \frac{\partial \beta^n}{\partial x^i}  + \beta_n^2 \frac{\partial \alpha_{nn}}{\partial x^i} \overset{\eqref{dual}}{=} \\
2 \beta_n \left( \frac{\partial \beta_n}{\partial x^i} - \beta_n \frac{\partial \alpha_{nn}}{\partial x^i} \right)  + \beta_n^2 \frac{\partial \alpha_{nn}}{\partial x^i} =
2 \beta_n \frac{\partial \beta_n}{\partial x^i} - \beta_n^2 \frac{\partial \alpha_{nn}}{\partial x^i} \overset{\eqref{christoffel}}{=} \\
2 \beta_n \frac{\partial \beta_n}{\partial x^i} - 2 \beta_n^2 \Gamma_{in}^{n*} \circ \pi =
-2 \beta_n^2 \left( {\Gamma}^{n*}_{in}\circ \pi - \frac{1}{\beta_n} \frac{\partial \beta_n}{\partial x^i} \right).
\end{gathered} \]
So, we have proved the following characterization theorem obtained in \cite{Vin1} by different methods.

\begin{Prop} \label{existence} A connected non-Riemannian Randers space with a metric $F=\alpha+\beta$ is a generalized Berwald space if and only if the dual vector field of the perturbating term has a positive constant length with respect to the Riemannian part of the Randers metric.
\end{Prop}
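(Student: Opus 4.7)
The plan is to read off both directions of the equivalence from the single identity
\[ \frac{\partial}{\partial x^i}\bigl(\alpha_{jk}\beta^j\beta^k\bigr)(p) \;=\; -2\beta_n(p)^2\, C_{n;i}(p) \]
already established in the computation preceding the statement, by interpreting $\alpha_{jk}\beta^j\beta^k = \|\beta^{\sharp}\|_{\alpha}^2$ as a coordinate-free smooth function on $M$, and noting that in the adapted frame of Section~4.1 one has $\beta_n(p)=\|\beta^{\sharp}\|_{\alpha}(p)\neq 0$.

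For the forward implication, I would assume the existence of a compatible linear connection $\nabla$ on $M$. Then at every point $p\in M$ the affine set $A_p$ of pointwise solutions of the compatibility system is nonempty, so Corollaries~\ref{correartail} and~\ref{correardiag} (obtained by comparing coefficients of the coordinate functions $y^a$ in the $n$-th and in the first $n-1$ equations) force $C_{n;i}(p)=0$ for $i=1,\dots,n$. The displayed identity then yields $d\bigl(\|\beta^{\sharp}\|_{\alpha}^2\bigr)_p=0$ at every $p$, and connectedness of $M$ upgrades this to $\|\beta^{\sharp}\|_{\alpha}\equiv c$ for some constant $c$. The constant is strictly positive because $\beta_p\neq 0$ at every $p$ by the standing assumption.

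For the converse, assume $\|\beta^{\sharp}\|_{\alpha}\equiv c>0$. At an arbitrary $p\in M$, pick adapted coordinates as in Section~4.1 so that $\beta_n(p)=c\neq 0$, and observe that $\partial_{x^i}\|\beta^{\sharp}\|_{\alpha}^2(p)=0$. The identity forces $C_{n;i}(p)=0$ for all $i$, so by Corollaries~\ref{correartail} and~\ref{correardiag} (and the triplet analysis that fixes the front tails) the pointwise system is solvable at $p$; in particular $A_p\neq\emptyset$. To produce a global compatible connection, select in each $A_p$ the norm-minimizing element (the extremal torsion), which is the orthogonal projection of the origin onto a nonempty affine subspace that depends smoothly on $p$; hence the selection is smooth. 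Plugging this smooth torsion into the Christoffel formula~\eqref{torsion} recovers a smooth linear connection $\nabla$, which is compatible with $F$ by construction.

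The main obstacle is the smooth assembly in the converse: while pointwise solvability is immediate once $\|\beta^{\sharp}\|_{\alpha}$ is constant, the coordinate formulas~\eqref{reartail}, \eqref{reardiag}, \eqref{fronttail}, and~\eqref{triplet2} for the torsion components are written in $p$-dependent adapted frames, and one must argue that they can be packaged into a smooth tensor field on $M$. The extremal prescription is the cleanest way out, because it replaces the coordinate-dependent formulas by an intrinsic minimization problem whose solution varies smoothly with the smoothly varying affine data.
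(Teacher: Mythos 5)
Your proof is correct and follows essentially the same route as the paper: both directions reduce to the identity $\partial(\alpha_{jk}\beta^j\beta^k)/\partial x^i=-2\beta_n^2\,C_{n;i}$ combined with the pointwise solvability criteria of Corollaries \ref{correartail} and \ref{correardiag} and the connectedness of $M$. The only difference is that you explicitly address the smooth global assembly of a compatible connection in the converse direction, a point the paper's proof leaves implicit and settles only later through the explicit torsion formulas of Theorem \ref{main}.
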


\begin{proof} The metric admits a compatible linear connection if and only if the system \eqref{compeqinnprodranders2} has a solution for any $p\in M$. As we have seen above the solvability conditions
$$C_{n;i}(x) = {\Gamma}^{n*}_{in}(p) - \frac{1}{\beta_n(x)} \frac{\partial \beta_n}{\partial x^i}(x)=0\hspace{1cm} (i=1, \dots, n)$$
are equivalent to the vanishing of the partial derivatives (with respect to the position) of the norm square of the dual vector field with respect to the Riemannian part of the Randers metric. Using that the manifold is connected, we have that the norm must be constant. It is positive because the Randers metric is non-Riemannian.  
\end{proof}

\section{The extremal compatible linear connection of a Randers space}

\begin{Thm}\label{main} Let $F=\alpha+\beta$ be a non-Riemannian Randers metric on a connected manifold $M$. There exist a compatible linear connection on $M$ if and only if the dual vector field of the perturbating term has a positive constant length with respect to the Riemannian part of the Randers metric. If $p \in M$ is a given  point and $\partial/\partial u^1, \ldots, \partial/\partial u^n$ is a local coordinate system such that
\begin{itemize}
\item $(\partial/\partial u^1)_p, \ldots, (\partial/\partial u^n)_p$ is an orthonormal basis of $T_p M$ with respect to $\alpha$,
\item $\beta_1(p) = \dots = \beta_{n-1}(p)=0, \beta_n(p) \neq 0$, 
\end{itemize}
then the connection parameters of the extremal compatible linear connection are
\begin{equation}
\Gamma_{ij}^r(p)=\Gamma_{ij}^{*r}(p)-\frac{1}{2}\left(T^{i}_{jr}(p)+T^{j}_{ir}(p)-T_{ij}^r(p)\right),
\end{equation}
where $\Gamma_{ij}^{*r}$ denotes the connection parameters of the L\'{e}vi-Civita connection of $\alpha$, and the torsion components $T_{ab}^{c}$ of the extremal compatible linear connection are given as follows:
\begin{itemize}
\item all components in the front short blocks are $0$, 
\item the tails of the front blocks are
$$T_{ab}^{n}(p)=\frac{1}{\beta_n}(x) \left( \frac{\partial \beta_b}{\partial x^a}(x) - \frac{\partial \beta_a}{\partial x^b}(x) \right) \hspace{1cm} (a< b < n),$$
\item the rear diagonal elements are 
$$T_{an}^{a}(p) = {\Gamma}^{n*}_{aa}(p)-\frac{1}{\beta_n(x)} \frac{\partial \beta_a}{\partial x^a}(x)\hspace{1cm} (a=1, \dots, n-1),$$
\item the tails of the rear blocks are
$$T_{an}^{n}(p) = {\Gamma}^{n*}_{an}(p) - \frac{1}{\beta_n(x)}  \frac{\partial \beta_a}{\partial x^n}(x) \hspace{1cm} (a=1, \dots, n-1),$$
\item the non-diagonal elements of the rear short blocks are 
$$T_{an}^{c}(p)= \Gamma^{n*}_{ac}(p) -\frac{1}{2\beta_n(x)} \left( \frac{\partial \beta_c}{\partial x^a}(x) + \frac{\partial \beta_a}{\partial x^c}(x) \right) \hspace{1cm} (a, c=1, \dots, n-1, a \neq c).$$ 
\end{itemize}
\end{Thm}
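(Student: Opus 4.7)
The existence half of the statement is already contained in Proposition \ref{existence}, so the plan focuses on identifying the torsion components of the \emph{extremal} compatible linear connection and on reading off its connection parameters via \eqref{torsion}.

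First, I would recall what the previous subsections have already pinned down. Corollary \ref{correartail} uniquely fixes the rear tails $T_{an}^{n}$, Corollary \ref{correardiag} uniquely fixes the rear diagonals $T_{an}^{a}$, and the corollary immediately after it uniquely fixes the front tails $T_{ab}^{n}$ and gives the coupling \eqref{triplet2} for the non-diagonal rear short block entries. Every compatible linear connection therefore agrees on these components, and their values at $p$ coincide with the formulas stated in the theorem. Only two families of torsion components are still free: the components of the front short blocks, which by item I of Section 4.2 do not appear in \emph{any} compatibility equation; and, for each pair $a<c<n$ (with the roles of $a,c$ distinguished), the two entries $T_{an}^{c}$, $T_{cn}^{a}$, linked solely by $T_{an}^{c}+T_{cn}^{a}=S_{ac}$.

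Next I would minimize the pointwise norm \eqref{torsionnorm}. Because the chosen basis is $\alpha$-orthonormal at $p$, the norm squared is simply the sum of squares of all components. The fixed entries contribute a constant, so minimization reduces to two independent elementary problems: the front short block components appear as unconstrained squares and are minimized by setting each to $0$; each constrained pair $(T_{an}^{c})^{2}+(T_{cn}^{a})^{2}$ with the affine constraint $T_{an}^{c}+T_{cn}^{a}=S_{ac}$ is minimized at the symmetric point $T_{an}^{c}=T_{cn}^{a}=S_{ac}/2$ (for instance by completing the square, or by noting that the constraint line meets its closest point to the origin on the diagonal). Substituting the definition of $S_{ac}$ from \eqref{triplet2} then yields the asserted formula for the non-diagonal rear short block components.

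Finally I would deduce the connection parameters. The orthonormality assumption $\alpha_{ij}(p)=\delta_{ij}$ forces $\alpha^{kr}(p)=\delta^{kr}$, so formula \eqref{torsion} specializes at $p$ to
\[\Gamma_{ij}^{r}(p)=\Gamma_{ij}^{*r}(p)-\tfrac{1}{2}\bigl(T^{i}_{jr}(p)+T^{j}_{ir}(p)-T^{r}_{ij}(p)\bigr),\]
which matches the expression in the theorem. The only real obstacle is bookkeeping: one must verify that every triple of indices has been classified exactly once among front short blocks, front tails, rear short block diagonals, rear short block off-diagonals, and rear tails, so that the minimization above truly exhausts all free parameters. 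Once this is checked, uniqueness of the minimizer follows from the strict convexity of the Euclidean norm on the affine solution set $A_{p}$, as remarked in Section 3.
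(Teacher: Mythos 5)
Your proposal is correct and follows essentially the same route as the paper's proof: existence via Proposition \ref{existence}, the uniquely determined components from the preceding corollaries, setting the front short block components to zero, minimizing each constrained pair at the symmetric point $S_{ac}/2$, and specializing \eqref{torsion} using $\alpha_{ij}(p)=\delta_{ij}$. The only addition is your explicit remark about checking that the index classification is exhaustive, which the paper handles implicitly through the case analysis I--IV.
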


\begin{proof} For the existence of compatible linear connections, we refer to Proposition \ref{existence}. To compute the connection parameters, we use formula \eqref{torsion} and $\alpha_{ij}(p)=\delta_{ij}$. 

In the previous section, we computed the components of the torsion of an arbitrary compatible linear connection. We get the extremal connection by choosing the free parameters in such a way that the quadratic sum of all the components is minimal.
\begin{itemize}
\item Components of the front short blocks never appear in the compatibility equations, thus they are free variables and no other components depend on them. So to minimize the norm, we set them all zero. 
\item The components $T_{ab}^{n}$, $T_{an}^{a}$ and $T_{an}^{n}$ are uniquely determined for every compatible linear connection, as we have seen by formulas \eqref{fronttail}, \eqref{reardiag} and \eqref{reartail}, respectively.
\item According to formula \eqref{triplet2}, the sum $T_{an}^{c}+T_{cn}^{a}$ ($a\neq c, c<n$) is the  constant $S_{ac}$ (and they only depend on each other). So the norm is minimized if we set them $T_{an}^{c}=T_{cn}^{a}= \frac{1}{2}S_{ac}$ by the solution of the minimum problem $x^2 + (c-x)^2 \to \mathrm{min}$. \qedhere
\end{itemize}
\end{proof}

\begin{Rem} {\emph{The so-called extremal linear connection can always be introduced on a Randers space by the formulas in Theorem \ref{main} for the components of the torsion tensor. However, it is not compatible to the Randers metric in general. The necessary and sufficient condition for such a linear connection to be compatible to the metric is that the dual vector field of $\beta$ has constant length with respect to the Riemannian metric $\alpha$.}}
\end{Rem}

\begin{Cor} According to the number of the tensor components that can be chosen arbitrarily, we have that
$$\dim A_p=n\binom{n-1}{2}$$
for a Randers space of dimension $n\geq 3 $ admitting compatible linear connections. Especially, it is of dimension zero, i.e. a singleton, in case of dimension two. 
\end{Cor}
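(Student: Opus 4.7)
The plan is to count the genuinely free torsion components at $p$ after the compatibility equations have been imposed, using the block decomposition set up in the previous subsections; the dimension of $A_p$ equals this count because $A_p$ is an affine subspace of $\wedge^2 T_p^\ast M \otimes T_pM$.

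I would partition the $\binom{n}{2}n$ unknowns $T_{ab}^c$ ($a<b$) into four groups and tabulate their degrees of freedom. First, the $(n-1)\binom{n-1}{2}$ components of the front short blocks never appear in any compatibility equation by observation I.\ in Section~4, so every one of them is free. Second, the front tails, the rear diagonals, and the rear tails are uniquely pinned down by formulas \eqref{fronttail}, \eqref{reardiag} and \eqref{reartail}, respectively, and therefore contribute $0$ to the count. Third, the $2\binom{n-1}{2}$ non-diagonal rear short-block components arrange themselves into $\binom{n-1}{2}$ pairs $(T_{an}^c, T_{cn}^a)$ whose sum is constrained to be $S_{ac}$ by \eqref{triplet2}, and nothing else; each such pair thus contributes exactly one free parameter.

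Adding these contributions gives $\dim A_p = (n-1)\binom{n-1}{2}+\binom{n-1}{2}=n\binom{n-1}{2}$, which vanishes precisely when $n=2$; in that case $A_p$ is non-empty by Theorem \ref{main}, hence is a singleton.

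The only point requiring slight care is to verify that distributing the prescribed sum $S_{ac}$ arbitrarily between $T_{an}^c$ and $T_{cn}^a$ indeed yields a solution of the entire system, i.e., that no further compatibility equation fixes the individual values. This is immediate from Proposition \ref{cool04}: the first $n-1$ equations involve these components only through the combinations $T_{in}^k+T_{kn}^i$, and by Lemma \ref{cool01} the $n$-th equation does not involve them at all. I therefore do not anticipate any genuine obstacle; the corollary is essentially a bookkeeping consequence of the block-by-block analysis already carried out in Section~4.
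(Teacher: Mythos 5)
Your proposal is correct and follows essentially the same bookkeeping as the paper: count the $(n-1)\binom{n-1}{2}$ free front short-block components, note that the front tails, rear diagonals and rear tails are uniquely determined, and add one free parameter for each of the $\binom{n-1}{2}$ pairs $(T_{an}^c, T_{cn}^a)$ constrained only through their sum $S_{ac}$. Your closing verification that the pairs enter the equations only via their sums (so the splitting is genuinely free) is a point the paper leaves implicit, but it is the same argument.
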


\begin{proof} The number of the tensor components in the front short blocks is
$$(n-1)\binom{n-1}{2},$$
and the number of the components of the form $T_{an}^c$ ($a=1, \ldots, n-1, a\neq c, c< n$) is $\displaystyle{\binom{n-1}{2}}$. Their sum gives the dimension of the pointwise solution space $A_p$ of the compatibility equations.  
\end{proof}




\section{An example: the case of dimension 4}

In dimension $4$, the compatibility equation consists of $4$ equations with $\displaystyle{4\binom{4}{2}=24}$ unknown torsion components. Among the components in the $\displaystyle{\binom{3}{2}=3}$  front blocks only the tails have nonzero coefficients. Since $T_{12}^{1}, T_{12}^{2}, T_{12}^{3}; T_{13}^{1}, T_{13}^{2}, T_{13}^{3}; T_{23}^{1}, T_{23}^{2}, T_{23}^{3}$ do not appear in the equations (they have zero cefficients)  we set them all zero to get the extremal compatible connection. Using the formulas in I-IV., the matrix form of the compatibility equations is

\begin{table}[h!]
{\footnotesize
\tabulinesep=3pt
$\begin{tabu}{|c||ccc|cccc|cccc|cccc||c|}
\hline 
 & \multicolumn{3}{|c|}{\text{front tails}} & \multicolumn{4}{|c|}{\text{rear block (1,4)}} &  \multicolumn{4}{|c|}{\text{rear block (2,4)}} & \multicolumn{4}{|c||}{\text{rear block (3,4)}} & \\ 
\hline 
 & T_{12}^{4} &  T_{13}^{4} &  T_{23}^{4} &  T_{14}^{1} & T_{14}^{2} & T_{14}^{3} & T_{14}^{4} & T_{24}^{1} & T_{24}^{2} & T_{24}^{3} & T_{24}^{4} & T_{34}^{1} & T_{34}^{2} & T_{34}^{3} & T_{34}^{4} & \text{RHS} \\ 
\hline 
1 & -y^2 & -y^3  & 0 
    & \bm{ 2 y^1 } & y^2  & y^3  & 0
    & y^2  & 0 & 0 & 0
    & y^3  & 0 & 0 & 0 & 2C_{j;1} y^j \\
2 &y^1  & 0 & -y^3
    & 0 & y^1  & 0 & 0
    & y^1  & \bm{  2 y^2} & y^3  & 0
    & 0 & y^3 & 0 & 0 &  2C_{j;2} y^j \\ 
3 & 0 & y^1  & y^2
    & 0 & 0 & y^1  & 0 
    & 0 & 0 & y^2  & 0
    & y^1  & y^2 & \bm{2 y^3 } & 0 & 2C_{j;3} y^j \\ 
4 & 0 & 0 & 0
    & 0 & 0 & 0 & \bm{y^1 }
    & 0 & 0 & 0 & \bm{y^2 }
    & 0 & 0 & 0 & \bm{y^3 } & C_{j;4} y^j \\ 
\hline 
\end{tabu}$
\label{comptable1} }
\end{table}

According to Corollary \ref{correartail}, we can express the rear tails $T_{14}^{4}, T_{24}^{4}, T_{34}^{4}$ by comparing the coefficients of the $y^j$'s in the 4th equation:
\[ T_{14}^{4}(p) = C_{1;4}(x), \hspace{1cm}
   T_{24}^{4}(p) = C_{2;4}(x), \hspace{1cm}
   T_{34}^{4}(p) = C_{3;4}(x). \]

Rearranging the remaining terms in the first 3 equations, we get
\[ \begin{tabu}{ccccccc}
2T_{14}^{1} y^1 & + & (-T_{12}^{4}+T_{14}^{2}+T_{24}^{1})y^2 & + & (-T_{13}^{4}+T_{14}^{3}+T_{34}^{1})y^3 & = &  2C_{j;1} y^j   \\ 
&&&&&&\\
(T_{12}^{4}+T_{14}^{2}+T_{24}^{1}) y^1 & + & 2T_{24}^{2} y^2 & + & (-T_{23}^{4}+T_{24}^{3}+T_{34}^{2})y^3 & = &  2C_{j;2} y^j  \\  
&&&&&&\\
(T_{13}^{4}+T_{14}^{3}+T_{34}^{1}) y^1 & + & (T_{23}^{4}+T_{24}^{3}+T_{34}^{2}) y^2 & + & 2T_{34}^{3} y^3 & = &  2C_{j;3} y^j.  \\ 
\end{tabu}  \]

According to Corollary \ref{correardiag}, we can express the rear diagonal elements $T_{14}^{1}, T_{24}^{2}, T_{34}^{3}$ by comparing the coefficients of $y^i$ in the $i$-th equation:
\[ T_{14}^{1}(p) = C_{1;1}(x), \hspace{1cm}
   T_{24}^{2}(p) = C_{2;2}(x), \hspace{1cm}
   T_{34}^{3}(p) = C_{3;3}(x). \]

We can see that all triplets formed by the remaining terms appear twice. Let us consider for example  $T_{12}^{4},T_{14}^{2},T_{24}^{1}$. It appears in the first equation as the coefficient of $y^2$ and in the second equation as the coefficient of $y^1$. So we have
\[ \begin{tabu}{rcl}
-T_{12}^{4}+T_{14}^{2}+T_{24}^{1} & = &  2C_{2;1} \\
&&\\
T_{12}^{4}+T_{14}^{2}+T_{24}^{1}  & = &  2C_{1;2}
\end{tabu} \]
Adding these together and using \eqref{triplet2},
\[ T_{14}^{2}+T_{24}^{1} = 2 \Gamma^{4*}_{12} \circ \pi -\frac{1}{\beta_4} \left( \frac{\partial \beta_1}{\partial x^2} + \frac{\partial \beta_2}{\partial x^1} \right)=:S_{12}, \]
and by \eqref{fronttail},
\[ T_{12}^{4}= 2 C_{1;2} - S_{12} = \frac{1}{\beta_4} \left( \frac{\partial \beta_2}{\partial x^1} - \frac{\partial \beta_1}{\partial x^2} \right). \]
Using similar arguments, we can see that 
\[ T_{14}^{2}+T_{24}^{1} = S_{12}, \hspace{0.5cm}
T_{14}^{3}+T_{34}^{1} = S_{13}, \hspace{0.5cm}
T_{24}^{3}+T_{34}^{2} = S_{23}, \]
so in order to minimize the norm, we set them
\[ T_{14}^{2}=T_{24}^{1} = \frac{1}{2} S_{12}, \hspace{0.5cm}
T_{14}^{3}=T_{34}^{1} = \frac{1}{2} S_{13}, \hspace{0.5cm}
T_{24}^{3}=T_{34}^{2} = \frac{1}{2} S_{23}. \]
Finally,
\[ T_{13}^{4}= 2 C_{1;3} - S_{13} = \frac{1}{\beta_4} \left( \frac{\partial \beta_3}{\partial x^1} - \frac{\partial \beta_1}{\partial x^3} \right), \hspace{1cm}
T_{23}^{4}= 2 C_{2;3} - S_{23} = \frac{1}{\beta_4} \left( \frac{\partial \beta_3}{\partial x^2} - \frac{\partial \beta_2}{\partial x^3} \right). \]

\end{document}